\newtheorem{thm}{Theorem}
\newtheorem{prop}{Proposition}
\newtheorem{lem}{Lemma}
\newtheorem{cor}{Corollary}
\newtheorem{rem}{Remark}
\numberwithin{equation}{section}
\DeclareMathOperator{\oc}{\xrightarrow[]{o}}
\begin{document}
	
\title{AMS Journal Sample}
	
\author{Y. A. Dabboorasad$^{1}$, E. Y. Emelyanov$^1$, M. A. A. Marabeh$^1$}

\address{$^{1}$ Department of Mathematics, Middle East Technical University, Ankara, 06800 Turkey.} 
\email{yousef.dabboorasad@metu.edu.tr, yasad@iugaza.edu.ps, ysf\_atef@hotmail.com, eduard@metu.edu.tr, mohammad.marabeh@metu.edu.tr, m.maraabeh@gmail.com}

\subjclass[2010]{}
	
\subjclass[2010]{46A16, 46A40, 46B30}
\date{28.05.2017}
	
\keywords{topological convergence, vector lattice, order convergence, unbounded order convergence, atomic vector lattice}
	
\title{Order convergence in infinite-dimensional vector lattices is not topological}
	
\begin{abstract}
In this note, we show that the order convergence in a vector lattice $X$ is not topological unless $\dim X<\infty$. 
Furthermore, we show that, in atomic order continuous Banach lattices, the order convergence is topological on order intervals.
\end{abstract}

\maketitle

\section{Introduction}

A net $(x_\alpha)_{\alpha \in A}$ in a vector lattice $X$ is \textit{order convergent} to a vector $x \in X$ 
if there exists a net $(y_\beta)_{\beta \in B}$ in $X$ such that $y_\beta \downarrow 0$ and, for each $\beta \in B$, there is an
$\alpha_\beta \in A$ satisfying $\lvert x_\alpha - x \rvert \leq y_\beta$ for all $\alpha \geq \alpha_\beta$. 
In this case, we write $x_\alpha \xrightarrow{o} x$. It should be clear that an order convergent net has an order bounded tail. 
A net $x_\alpha$ in $X$ is said to be \textit{unbounded order convergent} to a vector $x$ if, for any 
$u\in X_{+}$, $\lvert x_{\alpha}-x\rvert\wedge u\xrightarrow{o}0$. 
In this case, we say that the net $x_{\alpha}$ $uo$-converges to $x$, and write $x_{\alpha}\xrightarrow{uo}x$. 
Clearly, order convergence implies $uo$-convergence, and they coincide for order bounded nets. 
For a measure space $(\Omega,\Sigma,\mu)$ and a sequence $f_n$ in $L_p(\mu)$ ($0\le p\leq\infty$), we have $f_n\xrightarrow{uo}0$ 
iff $f_n\to 0$ almost everywhere; see, e.g., \cite[Remark 3.4]{GTX}. Hence, $f_n\xrightarrow{o}0$ in $L_p(\mu)$ iff $f_n\to 0$ almost 
everywhere and $f_n$ is order bounded in $L_p(\mu)$. It is known that almost everywhere convergence is not topological in general, i.e. 
there may not be a topology such that the convergence with respect to this topology is the same as $a.e.$-convergence; 
see for example \cite{O66}. Thus, the unbounded order convergence is not topological in general.

A net $x_\alpha$ in a normed lattice $X$ is \textit{unbounded norm convergent} (\textit{$un$-convergent}) to a vector $x$ if, 
for all $u\in X_{+}$, $\big\lVert\lvert x_{\alpha}-x\rvert\wedge u\big\rVert\to 0$ (cf. \cite{KT,T,DOT,KMT}). 
In this case, we write $x_{\alpha}\xrightarrow{un}x$. Clearly, norm convergence implies $un$-convergence, and they agree for order bounded nets. 
Unlike order and unbounded order convergences, $un$-convergence is always topological, and the corresponding topology is referred to as the 
\textit{un-topology} (see \cite[Section 7]{DOT}). The $un$-topology has been recently investigated in detail in \cite{KMT}. 

Recall that a net $x_\alpha$ in a vector lattice $X$ is \textit{relatively uniformly convergent} to a vector $x$ if there is $u\in X_+$ such that for any $\varepsilon >0$ there is $\alpha_\varepsilon$ satisfying $\lvert x_\alpha - x \rvert \leq \varepsilon u$ for all $\alpha \geq \alpha_\varepsilon$. In this case, we write $x_\alpha \xrightarrow{ru}x$. In Archimedean vector lattices relatively uniformly convergence implies order convergence. 

An element $a>0$ in a vector lattice $X$ is called an \emph{atom} whenever, for every $x\in[0,a]$, there is a real $\lambda\geq 0$ such that $x=\lambda a$. 
It is known that the band $B_a$ generated by an atom $a$ is a projection band and $B_a=span\{a\}$. 
A vector lattice $X$ is called \emph{atomic} if the band generated by its atoms is $X$. For any atom $a$, let $P_a$ be the band projection corresponding to $B_a$. 

A normed lattice $(X,\lVert\cdot\rVert)$ is said to be \textit{order continuous} if, for every net $x_\alpha$ in $X$ with $x_\alpha\downarrow 0$, it holds  
$\lVert x_\alpha\rVert\downarrow 0$ (or, equivalently, $x_\alpha\oc 0$ in $X$ implies $\lVert x_\alpha\rVert\to 0$). 
Clearly, in order continuous normed lattices, $uo$-convergence implies $un$-convergence.

Since the order convergence could be easily not topological, many researchers investigated classes of ordered topological spaces, in which order convergence of nets (or sequences) 
agrees with the topological convergence. For instance, in \cite{DeMarr64/1}, DeMarr proved that a locally convex space $(X,\tau)$ can be made into an ordered vector space 
such that the convergence of nets with respect to $\tau$ is equivalent to order convergence if and only if $X$ is normable. In \cite[Theorem 1]{DeMarr64/2}, DeMarr showed 
that any locally convex space $(X,\tau)$ can be embedded into an appropriate ordered vector space $E$ such that $x_\alpha \xrightarrow{\tau}0$ iff $x_\alpha\xrightarrow{uo}0$ 
in $E$ for any net $x_\alpha$ in $X$.

In \cite[Theorem 1]{VK73}, the authors characterized ordered normed spaces in which the order convergence of nets coincides with the norm convergence. Also, they characterized 
ordered normed spaces in which order convergence and norm convergence coincide for sequences; see \cite[Theorem 3]{VK73}. 

As an extension of the work in \cite{DeMarr64/1,DeMarr64/2,VK73}, Chuchaev investigated ordered locally convex spaces, where the topological convergence agrees 
with the order convergence of eventually topologically bounded nets; see, for example, Theorem 2.3, Propositions 2.4, 2.5, and 2.6 in \cite{Ch76}. 
In addition, he studied ordered locally convex spaces, where the topological convergence is equivalent to the order convergence of sequences; see, for example,
Propositions 3.2, 3.3, 3.4 and 3.5 in \cite{Ch76}.

In this paper, our main result is that if $(X,\tau)$ is a topological vector lattice such that $x_\alpha\xrightarrow{\tau}0$ iff $x_\alpha\xrightarrow{o}0$ 
for any net $x_\alpha$ in $X$, then $\dim X<\infty$; see Theorem \ref{main}. It should be noticed that Theorem \ref{main} was proven in the case of ordered 
normed spaces with minihedral cones (see \cite[Theorem 2]{VK73}), in the case of Banach lattices (see \cite{W74}), and in the case of normed lattices 
(see \cite[Theorem 2]{Goro96}). A useful characterization of $uo$-convergence in atomic vector lattices is given in Proposition \ref{atomipointwiseconvergence}. 
In addition, we show that the order convergence is topological on order intervals of atomic order continuous Banach lattices; see Corollary \ref{o-convergence is topological}. 
A partial converse of Corollary \ref{o-convergence is topological} is given in Theorem \ref{converse}.

Throughout this paper, all vector lattices are assumed to be Archimedean.

\section{Order convergence is not topological}\label{S1}

In this section, we show that order convergence is topological only in finite-dimensional vector lattices. We begin with the following technical lemma.

\begin{lem}\label{lemma 1}
Let $(X,\tau)$ be a topological vector lattice in which $x_\alpha \xrightarrow{\tau}0$ implies $x_\alpha \xrightarrow{o}0$ for any net $x_\alpha$. 
The following statements hold.
\begin{itemize}
\item[(i)] There is a strong unit $e\in X$.
\item[(ii)] For any net $(x_\alpha)$ in $X$, if $x_\alpha\xrightarrow{\tau}0$ then $x_\alpha\xrightarrow{\lVert\cdot\rVert_e}0$, 
where $\lVert x\rVert_e:=\inf\{\lambda>0:\lvert x\rvert\leq\lambda e\}$.
\end{itemize}

\end{lem}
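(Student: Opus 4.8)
The plan is to extract both parts from a single structural fact, namely that \emph{some solid $\tau$-neighborhood $U_0$ of $0$ is order bounded}, say $U_0\subseteq[-e,e]$ with $e\ge 0$. Granting this, part (i) is immediate: $U_0$ is absorbing, so every $x\in X$ lies in $nU_0\subseteq[-ne,ne]$ for some $n$, whence $\lvert x\rvert\le ne$ and $e$ is a strong unit. Part (ii) is then a rescaling argument that does not even invoke the order-convergence hypothesis again: given $x_\alpha\xrightarrow{\tau}0$ and $\varepsilon>0$, the set $\varepsilon U_0$ is again a $\tau$-neighborhood of $0$, so eventually $x_\alpha\in\varepsilon U_0\subseteq[-\varepsilon e,\varepsilon e]$, i.e. $\lvert x_\alpha\rvert\le\varepsilon e$ and hence $\lVert x_\alpha\rVert_e\le\varepsilon$; as $\varepsilon$ was arbitrary, $\lVert x_\alpha\rVert_e\to 0$. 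The finiteness of $\lVert\cdot\rVert_e$ (and that it is a genuine lattice norm) is guaranteed by $e$ being a strong unit together with the standing Archimedean assumption.

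Thus everything reduces to producing the order-bounded solid neighborhood, and I would do this by contradiction, assuming every solid $\tau$-neighborhood of $0$ is order unbounded; here I use that a topological vector lattice has a base of solid neighborhoods at $0$. Form the index set $A=\{(U,x):U\text{ a solid }\tau\text{-neighborhood of }0,\ x\in U\}$, directed by $(U,x)\le(V,y)\iff U\supseteq V$ (directedness holds because the intersection of two solid neighborhoods is a solid neighborhood containing $0$). Define the net $z_{(U,x)}:=x$. Then $z\xrightarrow{\tau}0$, since for any neighborhood $V$ we may pick a solid $W\subseteq V$ and observe that $(U,x)\ge(W,0)$ forces $x\in U\subseteq W\subseteq V$.

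By hypothesis $z\xrightarrow{o}0$, so $z$ has an order bounded tail: there is an index $(W,y)$ for which $\{z_\alpha:\alpha\ge(W,y)\}$ is order bounded. The crux, and the step I expect to be the only real obstacle, is to identify this tail: I claim it equals $W$. Indeed every value in the tail lies in some $U\subseteq W$ and hence in $W$, while conversely every $x\in W$ is realized as $z_{(W,x)}$ with $(W,x)\ge(W,y)$. So $W$ itself is order bounded, contradicting our assumption. This contradiction yields the desired order-bounded solid neighborhood $U_0$, and the two displays above then finish (i) and (ii).
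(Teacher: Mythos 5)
Your proposal is correct and is essentially the paper's own argument: the same net indexed by pairs (neighborhood, point) ordered by reverse inclusion, the same identification of an order-bounded tail with a neighborhood $U_0\subseteq[-e,e]$, and the same absorbing/rescaling steps for (i) and (ii). The only differences are cosmetic — you wrap the key step in a contradiction and restrict to solid neighborhoods (which is harmless but never actually used), whereas the paper extracts $U_0$ directly from the order-convergence hypothesis.
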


\begin{proof}
Let $\mathcal{N}$ be the zero neighborhood base of $\tau$.
\begin{itemize}
\item[(i)]  
Let $\Delta:=\{(y,U): U\in\mathcal{N}\ \text{and}\ y\in U\}$ be 
ordered by $(y_1,U_1)\leq(y_2,U_2)$ iff $U_1 \supseteq U_2$. 
Under this order, $\Delta$ is directed upward. For each $\alpha\in\Delta$, 
let $x_\alpha=x_{(y,U)}:=y$. Clearly, $x_\alpha\xrightarrow{\tau}0$. 
Now the assumption assures that $x_\alpha\xrightarrow{o}0$. 
So there are $(y_0,U_0)\in\Delta$ and $e\in X_+$ such that, for all $(y,U)\geq(y_0,U_0)$, we have $x_{(y,U)}=y\in[-e,e]$. 
In particular, $U_0\subseteq[-e,e]$. Since $U_0$ is absorbing, for every $x\in X$, there is $n\in\mathbb{N}$ satisfying $\lvert x\rvert\in nU_0$
and so $\lvert x\rvert\leq ne$. Hence, $e$ is a strong unit.

\item[(ii)] Since $e$ is a strong unit then $I_e=X$, where $I_e$ is the ideal generated by $e$. 
For each $x\in X$, let $\lVert x\rVert_e:=\inf\{\lambda>0:\lvert x\rvert\leq\lambda e\}$. Then $(X,\lVert\cdot\rVert_e)$ is a normed lattice; see, for example, \cite[Theorem 2.55]{AT07}. 
Suppose $x_\alpha\xrightarrow{\tau}0$. Let $U_0$ be the zero neighborhood as in part (i). Given $\varepsilon>0$. Then $\varepsilon U_0$ is also a zero neighborhood. 
Hence, there is $\alpha_\varepsilon$ such that $x_\alpha\in\varepsilon U_0$ for all $\alpha\geq\alpha_\varepsilon$. This implies $\lvert x_\alpha\rvert\leq\varepsilon e$ for all 
$\alpha\geq\alpha_\varepsilon$. Hence $x_\alpha\xrightarrow{\lVert\cdot\rVert_e}0$.
\end{itemize}
\end{proof}

Now we are ready to prove our main result, whose proof is motivated by \cite{W74}.
\begin{thm}\label{main}
Let $(X,\tau)$ be a topological vector lattice. The following statements are equivalent.
\begin{itemize}
\item[(i)] $\dim X<\infty$.

\item[(ii)] $x_\alpha\xrightarrow{\tau}0$ iff $x_\alpha\xrightarrow{o}0$ for any net $x_\alpha$ in $X$.
\end{itemize}
\end{thm}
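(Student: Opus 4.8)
The plan is to prove the two implications separately, with $(ii)\Rightarrow(i)$ carrying essentially all of the work.

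For $(i)\Rightarrow(ii)$ I would invoke the structure theory of finite-dimensional Archimedean vector lattices: such an $X$ is lattice isomorphic to $\mathbb{R}^n$ with the coordinatewise order. On $\mathbb{R}^n$ the order convergence of a net is exactly coordinatewise convergence, and, since every Hausdorff vector topology on a finite-dimensional space coincides with the Euclidean topology, $\tau$-convergence is coordinatewise convergence as well. Hence the two notions agree and $(ii)$ holds. This direction is routine once the classification and the uniqueness of the topology are quoted.

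For $(ii)\Rightarrow(i)$, assume that order convergence and $\tau$-convergence coincide, and suppose for contradiction that $\dim X=\infty$. Lemma~\ref{lemma 1} supplies a strong unit $e$ together with the monotone norm $\lVert x\rVert_e=\inf\{\lambda>0:\lvert x\rvert\le\lambda e\}$, whose closed unit ball is $[-e,e]$, and the implication $x_\alpha\xrightarrow{\tau}0\Rightarrow x_\alpha\xrightarrow{\lVert\cdot\rVert_e}0$. Combining this with the hypothesis yields the key chain valid for every net: $x_\alpha\xrightarrow{o}0\Rightarrow x_\alpha\xrightarrow{\tau}0\Rightarrow x_\alpha\xrightarrow{\lVert\cdot\rVert_e}0$. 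Next I would use the structural fact that an infinite-dimensional Archimedean vector lattice contains an infinite sequence $(u_n)$ of nonzero, pairwise disjoint positive vectors; replacing $u_n$ by $u_n\wedge e$ (still nonzero, since $e$ is also a weak unit, still pairwise disjoint, and now lying in $[0,e]$) and normalising, I obtain a pairwise disjoint sequence $(v_n)\subset[0,e]$ with $\lVert v_n\rVert_e=1$ for every $n$.

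The contradiction is then purely order-theoretic: a disjoint sequence is $uo$-null (see \cite{GTX}), and since $(v_n)$ is order bounded, $uo$-convergence coincides with order convergence, whence $v_n\xrightarrow{o}0$. Feeding this into the chain above forces $\lVert v_n\rVert_e\to 0$, contradicting $\lVert v_n\rVert_e=1$. Therefore $\dim X<\infty$, completing the equivalence. I expect the main obstacle to be the structural input that infinite dimension forces an infinite disjoint sequence; here the standing Archimedean hypothesis is essential, since a non-Archimedean totally ordered lattice (for instance a lexicographic order) can be infinite-dimensional yet have no two nonzero disjoint elements. A secondary point to check carefully is the truncation-and-normalisation step, namely that $u_n\wedge e>0$ and that $v_n=(u_n\wedge e)/\lVert u_n\wedge e\rVert_e$ genuinely lies in $[0,e]$ with unit norm, which rests on $e$ being simultaneously a strong and a weak unit together with the Archimedean inequality $w\le\lVert w\rVert_e\,e$ for $w\ge 0$.
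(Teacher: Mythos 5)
Your proposal is correct, and for the substantive implication (ii) $\Rightarrow$ (i) it takes a genuinely different route from the paper. The two proofs coincide up through Lemma \ref{lemma 1}: both extract the strong unit $e$ and the equivalence, for all nets, of $\tau$-, $\lVert\cdot\rVert_e$- and order convergence. From there the paper follows \cite{W74} into functional analysis: it forms the norm completion of $(X,\lVert\cdot\rVert_e)$, identifies the ideal generated by $e$ in the completion with a $C(K)$-space via Kakutani's representation theorem, and derives the contradiction from the topology of $K$: the downward directed family $F=\{f\in X: f\geq 0,\ f(t_0)=1\}$ decreases pointwise to $\chi_{t_0}$ by Urysohn's lemma, so if $\chi_{t_0}\notin C(K)$ then $f_\alpha\downarrow 0$, i.e.\ $f_\alpha\xrightarrow{o}0$ while $\lVert f_\alpha\rVert_e\geq 1$; hence every singleton is open, and $K$ is discrete and compact, therefore finite. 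You instead argue purely order-theoretically inside $X$: infinite dimension of an Archimedean vector lattice yields an infinite disjoint sequence of nonzero positive vectors (the Luxemburg--Zaanen finite-dimensionality criterion, see \cite[Section 26]{LZ71}, a reference already in the paper's bibliography though unused in its proof), truncation by $e$ and normalization preserve disjointness and produce $v_n\in[0,e]$ with $\lVert v_n\rVert_e=1$ (the two checks you flag are exactly the right ones: $u_n\wedge e>0$ because a strong unit is a weak unit, and $v_n\leq e$ because $w\leq\lVert w\rVert_e\,e$ for $w\geq 0$ in Archimedean spaces), and then the fact that disjoint sequences are uo-null \cite{GTX}, combined with order boundedness, gives $v_n\xrightarrow{o}0$, which contradicts $\lVert v_n\rVert_e=1$ via the chain $o\Rightarrow\tau\Rightarrow\lVert\cdot\rVert_e$. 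What each approach buys: yours avoids the norm completion and the Kakutani representation entirely, replacing them with two citable order-theoretic facts, so it is arguably more elementary and stays within the uo-convergence toolkit the paper already uses elsewhere; the paper's argument is heavier machinery but yields as a by-product the concrete realization of $X$ as a sublattice of a $C(K)$-space. Both proofs genuinely need the standing Archimedean hypothesis, which the paper assumes globally.
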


\begin{proof}
The implication (i) $\Longrightarrow$ (ii) is trivial.\\
(ii) $\Longrightarrow$ (i). It follows from Lemma \ref{lemma 1}, that $X$ has a strong unit $e$, and $(X=I_e,\lVert\cdot\rVert_e)$ is a normed lattice. 
For a net $x_\alpha$ in $X$, $x_\alpha\xrightarrow{\lVert\cdot\rVert_e}0$ $\Rightarrow$ $x_\alpha\xrightarrow{ru}0$ $\Rightarrow$ $x_\alpha\xrightarrow{o}0$ 
$\Rightarrow$ $x_\alpha\xrightarrow{\tau}0$. Combining this with Lemma \ref{lemma 1}(ii), we get $x_\alpha\xrightarrow{\lVert\cdot\rVert_e}0$ iff $x_\alpha\xrightarrow{\tau}0$.

Let $(\widehat{X},\lVert\cdot\rVert)$ be the norm completion of $(X,\lVert\cdot\rVert_e)$. Then $(\widehat{X},\lVert\cdot\rVert)$ is a Banach lattice. 
Let $\widehat{I_e}$ be the ideal generated by $e$ in $\widehat{X}$. Then it follows from \cite[Theorem 3.4]{AA02}, that $(\widehat{I_e},\widehat{\lVert\cdot\rVert}_e)$ 
is an AM-space with a strong unit $e$, where $\widehat{\lVert z\rVert}_e:=\inf\{\lambda>0:\lvert z\rvert\leq\lambda e\}$. 
Now \cite[Theorem 3.6]{AA02} implies that $\widehat{I_e}$ is lattice isometric to $C(K)$-space for some compact Hausdorff space $K$ 
such that the strong unit $e$ is identified with the constant function $\mathds 1$ on $K$. 
Clearly,  $X=I_e$ is a sublattice of $\widehat{I_e}$ and so, we can identify elements of $X$ with continuous functions on $K$.

Let $t_0\in K$ and $g=\chi_{t_{0}}$ be the characteristic function of $\{t_0\}$. Define
$$
  F:=\{f\in X: f\geq0 \ \text{and} \ f(t_0)=1\}.
$$
Then $F$ is directed downward under the pointwise ordering. For each $\alpha\in F$, let $f_\alpha=\alpha$. 
Then, Urysohn's extension lemma assures that $f_\alpha\downarrow g$ pointwise. If $g\not\in C(K)$ then $f_\alpha\downarrow 0$ in $C(K)$, and so 
$f_\alpha\downarrow 0$ in $X$. That is $f_\alpha\xrightarrow{o}0$ in $X$, and hence $f_\alpha\xrightarrow{\tau}0$ or $f_\alpha\xrightarrow{\lVert\cdot\rVert_e}0$, 
which is a contradiction since $\lVert f_\alpha\rVert_e\geq 1$. Thus, $g\in C(K)$, and so $\{t_0\}$ is open in $K$. So $K$ is discrete and hence finite. 
Therefore, $\dim X<\infty$.
\end{proof}

\begin{rem}
In \cite[p. 162]{VK73} an example is given of an infinite-dimensional ordered normed space $($which is not a normed lattice$)$, where the norm convergence coincides with the order convergence.
\end{rem}

In what follows, we show that, in atomic order continuous Banach lattices, the order convergence can be topologized on order intervals. 
The following result could be known, but since we do not have an appropriate reference, we include its proof for the sake of completeness.

\begin{prop}\label{atomipointwiseconvergence}
Let $X$ be an atomic vector lattice. Then a net $x_\alpha$ $uo$-converges iff it converges pointwise. 
\end{prop}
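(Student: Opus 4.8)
The plan is to fix a maximal family $\{a_i\}_{i\in I}$ of pairwise disjoint atoms, write the band projection onto $B_{a_i}=\mathrm{span}\{a_i\}$ as $P_i$, and encode each coordinate through the scalars $\phi_i(x)$ defined by $P_i x=\phi_i(x)\,a_i$. With this notation, \emph{pointwise convergence} of $x_\alpha$ to $x$ means $\phi_i(x_\alpha)\to\phi_i(x)$ for every $i$ (equivalently $P_i x_\alpha\to P_i x$ in the one-dimensional band $B_{a_i}$). Since both $uo$- and pointwise convergence are translation invariant, I may assume throughout that the limit is $0$.

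For the forward implication I would test the definition of $uo$-convergence against a single atom. Assuming $x_\alpha\uoc 0$ and fixing $i$, the choice $u=a_i$ gives $|x_\alpha|\wedge a_i\oc 0$. Because $P_i$ is a lattice homomorphism fixing $a_i$ and $|x_\alpha|\wedge a_i\in B_{a_i}$, one computes $|x_\alpha|\wedge a_i=|P_ix_\alpha|\wedge a_i=\min\{|\phi_i(x_\alpha)|,1\}\,a_i$; as order convergence in $B_{a_i}\cong\mathbb R$ is just convergence of the coefficient, this forces $\phi_i(x_\alpha)\to 0$. Hence $uo$-convergence implies pointwise convergence.

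The substantial direction is the converse. Assume $\phi_i(x_\alpha)\to 0$ for every $i$, fix $u\in X_+$, and set $w_\alpha:=|x_\alpha|\wedge u\in[0,u]$; the goal is $w_\alpha\oc 0$. Two ingredients are available. First, writing $P_J:=\sum_{i\in J}P_i$ for a finite $J\subseteq I$, disjointness of the $a_i$ together with atomicity yields $u-P_Ju=P_{J^c}u\downarrow 0$ along the finite subsets $J$. Second, for a fixed finite $J$ the truncation $P_Jw_\alpha=\sum_{i\in J}\min\{|\phi_i(x_\alpha)|,\phi_i(u)\}\,a_i$ lives in the finite-dimensional band $B_J:=\bigoplus_{i\in J}B_{a_i}$, so the tail suprema $s^J_\alpha:=\sup_{\gamma\ge\alpha}P_Jw_\gamma$ exist there and satisfy $s^J_\alpha\downarrow_\alpha 0$, using $\phi_i(x_\alpha)\to 0$. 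From $w_\gamma\le P_Jw_\gamma+P_{J^c}u$ I then get, for every $(J,\alpha)$,
$$ w_\gamma\le y_{(J,\alpha)}:=s^J_\alpha+P_{J^c}u\qquad(\gamma\ge\alpha). $$

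What remains, and this is the crux, is to turn the two-parameter family $\{y_{(J,\alpha)}\}$ into a genuinely decreasing net with infimum $0$, since $X$ need not be Dedekind complete and the naive coordinatewise tail supremum of $(w_\gamma)$ may fail to lie in $X$. I would avoid this by showing the family is directed downward: given $(J_1,\alpha_1)$ and $(J_2,\alpha_2)$, pick $J_3\supseteq J_1\cup J_2$ and $\alpha_3\ge\alpha_1,\alpha_2$; splitting $J_3=J_i\sqcup(J_3\setminus J_i)$ and bounding the new coordinates by $P_{J_3\setminus J_i}u=P_{J_i^c}u-P_{J_3^c}u$ (legitimate because $w_\gamma\le u$) gives $y_{(J_3,\alpha_3)}\le y_{(J_i,\alpha_i)}$ for $i=1,2$. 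A short coordinatewise computation, using $s^J_\alpha\downarrow_\alpha 0$ on the atoms in $J$ and atomicity to kill the positive part of any common lower bound, shows the infimum of the family is $0$. Re-indexing this downward-directed family by itself produces the decreasing net required by the definition of order convergence, with the displayed inequality supplying the domination; thus $w_\alpha\oc 0$, and since $u$ was arbitrary, $x_\alpha\uoc 0$. The main obstacle is precisely this last packaging step, and the disjointness-plus-atomicity directedness argument is what makes it go through without any completeness hypothesis.
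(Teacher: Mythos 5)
Your proposal is correct and follows essentially the same route as the paper's proof: both dominate $|x_\alpha|\wedge u$ by a two-parameter net indexed by finite sets of atoms paired with a second directed set, splitting via $P_J$ and $P_{J^c}$ and using atomicity exactly where you do, namely to get $P_{J^c}u\downarrow 0$ and to force the infimum of the dominating family to be zero. The only difference is a minor technical variation: the paper takes the explicit dominating net $y_{(F,n)}=\frac{1}{n}P_F u + P_{F^c}u$ indexed by $\mathcal{P}_{fin}(\Omega)\times\mathbb{N}$, which is automatically decreasing and avoids your tail suprema $s^J_\alpha$ and the (harmless) appeal to Dedekind completeness of the finite-dimensional band $B_J$.
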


\begin{proof}
Without loss of generality, we can assume that the net $x_\alpha$ is in $X_+$ and converges to 0. The forward implication is obvious. 

For the converse, let $x_\alpha$ be a pointwise null in $X$.  Given $u\in X_+$, we need to show that $x_\alpha\wedge u\oc 0$. 
Let $\Delta=\mathcal{P}_{fin}(\Omega)\times\mathbb{N}$, where $\Omega$ is the collection of all atoms in $X$. 
The set $\Delta$ is directed w.r. to the following ordering: $(A,n)\le(B,m)$ if $A\subseteq B$ and $n\le m$. 
For each $\delta=(F,n)\in\Delta$, put $y_\delta=\frac{1}{n}\sum\limits_{a\in F}P_au+\sum\limits_{a\in \Omega\setminus F}P_au$, 
where $P_a$ denotes the band projection onto $span\{a\}$. It is easy to see that $y_\delta\downarrow 0$ and, for any $\delta\in\Delta$, 
there is $\alpha_\delta$ such that we have $0\leq x_\alpha\wedge u\leq y_\delta$ for any $\alpha\geq\alpha_\delta$. 
Therefore, $x_\alpha\wedge u\oc 0$.
\end{proof}

Unlike Theorem \ref{main}, the next theorem shows that $uo$-convergence is topological in any atomic vector lattice.    

\begin{thm}\label{uo-conv is top in discr VL}
The $uo$-convergence is topological in atomic vector lattices.
\end{thm}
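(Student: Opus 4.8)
The plan is to produce an explicit linear topology on $X$ whose convergent nets are exactly the $uo$-convergent ones. By Proposition \ref{atomipointwiseconvergence}, $uo$-convergence in $X$ agrees with pointwise convergence, so it suffices to recognize pointwise convergence as convergence in a genuine topology, namely the topology of pointwise convergence with respect to the atoms.

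First I would introduce coordinate functionals. Let $\Omega$ be the set of atoms of $X$. For each $a\in\Omega$ the band $B_a$ generated by $a$ equals $span\{a\}$, so the band projection $P_a$ onto $B_a$ can be written as $P_a x = f_a(x)\,a$ for a unique scalar $f_a(x)$, and $x\mapsto f_a(x)$ is a linear functional on $X$ because $P_a$ is linear. These functionals separate points: if $f_a(x)=0$ for every $a\in\Omega$, then $P_a x=0$, i.e.\ $x$ is disjoint from every atom, so $x$ lies in the disjoint complement of the band generated by the atoms; since $X$ is atomic this band is $X$, whence $x=0$.

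Next I would take $\tau_{uo}$ to be the initial topology on $X$ generated by the family $\{f_a:a\in\Omega\}$ --- equivalently, the topology induced on $X$ by the product topology of $\mathbb{R}^{\Omega}$ through the injective lattice embedding $x\mapsto(f_a(x))_{a\in\Omega}$. This is a Hausdorff linear topology precisely because the $f_a$ separate points. By the standard description of net convergence in an initial topology, $x_\alpha\to x$ in $\tau_{uo}$ if and only if $f_a(x_\alpha)\to f_a(x)$ for every atom $a$, which is exactly pointwise convergence of $x_\alpha$ to $x$.

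Finally I would combine the two reductions: by Proposition \ref{atomipointwiseconvergence}, $x_\alpha\xrightarrow{uo}x$ iff $x_\alpha\to x$ pointwise, and by the previous step the latter holds iff $x_\alpha\to x$ in $\tau_{uo}$. Hence $uo$-convergence of nets coincides with $\tau_{uo}$-convergence, so it is topological. The only point requiring care is matching the notion of \emph{pointwise convergence} in Proposition \ref{atomipointwiseconvergence} with coordinatewise convergence of the $f_a(x_\alpha)$, that is, confirming that ``pointwise'' refers precisely to the atom-coordinates $P_a x=f_a(x)\,a$; once this identification is made, the result follows from the elementary theory of initial topologies with no further estimates.
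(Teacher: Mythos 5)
Your proposal is correct and follows essentially the same route as the paper: the paper's proof simply invokes Proposition \ref{atomipointwiseconvergence} to identify $uo$-convergence with pointwise convergence and then observes that pointwise convergence is topological. Your argument fills in the details the paper leaves implicit, namely the construction of the initial topology generated by the coordinate functionals $f_a$ and the verification that its net convergence is exactly coordinatewise convergence, which is a worthwhile elaboration but not a different method.
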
 

\begin{proof}
By Proposition \ref{atomipointwiseconvergence}, $uo$-convergence in atomic vector lattices is the same as pointwise convergence and therefore is topological. 
\end{proof}

Clearly, $o$-convergence is nothing than eventually order bounded $uo$-con\-ver\-gence. Replacing ``eventually order bounded''
by ``order bounded'', we obtain the following result in atomic vector lattices.

\begin{cor}\label{o-convergence is topological}
Let $X$ be an atomic vector lattice. Then order convergence is topological on every order bounded subset of $X$.
\end{cor}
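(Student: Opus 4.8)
The plan is to read off the result from Theorem \ref{uo-conv is top in discr VL} together with the elementary fact, recalled in the Introduction, that order convergence and $uo$-convergence coincide for order bounded nets. First I would fix an order bounded subset $S\subseteq X$. By Theorem \ref{uo-conv is top in discr VL} (which rests on Proposition \ref{atomipointwiseconvergence}), $uo$-convergence on $X$ is induced by a topology $\tau_{uo}$, namely the topology of pointwise convergence. I would then endow $S$ with the subspace topology $\tau_{uo}|_S$ and argue that convergence of nets in $(S,\tau_{uo}|_S)$ is precisely order convergence.

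For the verification, take an arbitrary net $(x_\alpha)$ in $S$ and a point $x\in S$. Because $S$ is order bounded, the net $(x_\alpha)$ is order bounded, so $x_\alpha\xrightarrow{o}x$ if and only if $x_\alpha\xrightarrow{uo}x$; and the latter is, by the choice of $\tau_{uo}$, the same as $x_\alpha\to x$ in $\tau_{uo}|_S$. Chaining these equivalences shows that $o$-convergence of nets inside $S$ is exactly $\tau_{uo}|_S$-convergence, which is the assertion. Note that if a net in $S$ $o$-converges to some $x\notin S$, then it $\tau_{uo}$-converges to that same $x$ and hence fails to converge to any point of $S$ in either sense, so the two convergence structures remain consistent on $S$.

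The single point deserving attention is where the order boundedness is spent: the equivalence of $o$- and $uo$-convergence is only available for order bounded nets, which is exactly why the statement is confined to order bounded subsets. This matches the remark preceding the corollary---$o$-convergence is eventually order bounded $uo$-convergence, and on an order bounded set the ``eventually order bounded'' clause is automatic. By contrast, on all of $X$ the conclusion must fail as soon as $\dim X=\infty$, by Theorem \ref{main}. Apart from keeping track of this hypothesis at the right moment, passing to the subspace topology and transporting net convergence through it is routine, so I anticipate no substantial obstacle.
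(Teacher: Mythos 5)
Your proposal is correct and follows essentially the same route as the paper: invoke Theorem \ref{uo-conv is top in discr VL} to obtain the topology inducing $uo$-convergence, restrict it to the order bounded set, and use the coincidence of $o$- and $uo$-convergence for order bounded nets. Your write-up merely spells out in more detail (including the check of limits lying outside $S$) what the paper's two-line argument compresses.
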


\begin{proof}
By Theorem \ref{uo-conv is top in discr VL}, $uo$-convergence is topological in $X$ and hence on any subset of $X$ in the induced topology. 
Since order $o$-convergence coincides with $uo$-convergence on order intervals, we conclude that order convergence is also topological on order bounded subsets of $X$.
\end{proof}

The following result extends \cite[Theorem 5.3]{DOT}.

\begin{thm}\label{order cont. and atomic}
Let $X$ be a Banach lattice. The following statements are equivalent.
\begin{itemize}
\item[(i)] 	For any net $x_\alpha$ in $X$, $x_\alpha\xrightarrow{uo}0$ $\Longleftrightarrow$ $x_\alpha\xrightarrow{un}0$.
\item[(ii)] For any sequence $x_n$ in $X$, $x_n\xrightarrow{uo}0$ $\Longleftrightarrow$ $x_n\xrightarrow{un}0$.
\item[(iii)] $X$ is order continuous and atomic.
\end{itemize}
\end{thm}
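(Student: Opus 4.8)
The plan is to prove the cycle (i) $\Rightarrow$ (ii) $\Rightarrow$ (iii) $\Rightarrow$ (i). The implication (i) $\Rightarrow$ (ii) is immediate since every sequence is a net. The substance lies in (ii) $\Rightarrow$ (iii) and (iii) $\Rightarrow$ (i), and in each I would handle the \emph{order continuous} and \emph{atomic} features separately.

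For (iii) $\Rightarrow$ (i): order continuity already gives $x_\alpha\xrightarrow{uo}0 \Rightarrow x_\alpha\xrightarrow{un}0$ (as recorded earlier for order continuous normed lattices), so only the reverse implication needs atomicity. Suppose $x_\alpha\xrightarrow{un}0$ and fix an atom $a$; apply the definition of $un$-convergence with $u=a$. Writing $P_ax_\alpha=c_\alpha a$ and using that the band projection $P_a$ is a lattice homomorphism with $P_aa=a$, one computes $|x_\alpha|\wedge a = P_a(|x_\alpha|\wedge a) = (|c_\alpha|a)\wedge a = \min\{|c_\alpha|,1\}\,a$, whence $\min\{|c_\alpha|,1\}\,\|a\| = \big\||x_\alpha|\wedge a\big\| \to 0$ and therefore $c_\alpha\to 0$. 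Thus $x_\alpha\to 0$ coordinatewise on every atom, i.e. pointwise, and Proposition \ref{atomipointwiseconvergence} yields $x_\alpha\xrightarrow{uo}0$. The argument is valid for arbitrary nets, so (i) follows.

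For (ii) $\Rightarrow$ (iii) I would first extract order continuity. If $X$ were not order continuous, the standard disjoint characterization of order continuity provides an order bounded disjoint sequence $(x_n)\subseteq[0,u]$ with $\|x_n\|\ge\varepsilon>0$. An order bounded disjoint sequence is $uo$-null (indeed order null: if $s=\bigwedge_N\bigvee_{k\ge N}x_k$ then $s\perp x_m$ for every $m$, hence $s\perp s$ and $s=0$), so $x_n\xrightarrow{uo}0$; by (ii), $x_n\xrightarrow{un}0$, and since $(x_n)$ is order bounded this forces $\|x_n\|\to 0$, a contradiction. Hence $X$ is order continuous.

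It remains to show $X$ is atomic, and here I expect the main obstacle. Assuming $X$ order continuous but not atomic, the band disjoint from all atoms contains some $u>0$, and every nonzero component of $u$ is again diffuse and so splits into two nonzero disjoint components; iterating gives infinite disjoint families of nonzero components below any diffuse element. The target is a ``typewriter'' sequence that is norm null yet not $uo$-null. The key lemma to establish is that for each $m$ the element $u$ admits a \emph{finite} disjoint decomposition $u=\sum_{i=1}^{k_m}w_i^{(m)}$ with $\max_i\|w_i^{(m)}\|<1/m$. I would get this by choosing (Zorn) a maximal disjoint family of nonzero components of $u$ of norm at most $1/m$: the disjoint-sequence form of order continuity forces the family to be countable with norms tending to $0$, and diffuseness forces it to exhaust $u$ (otherwise a nonzero remainder would contain an infinite disjoint sequence of nonzero components, all of norm $>1/m$, contradicting order continuity); order continuity then makes the tail remainder $r_N=u-\sum_{i\le N}w_i$ satisfy $\|r_N\|\to 0$, which can be absorbed to yield a finite decomposition. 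Concatenating these finite decompositions over $m=1,2,\dots$ produces a sequence $(x_k)$ with $\|x_k\|\to 0$, hence $x_k\xrightarrow{un}0$, while every tail of $(x_k)$ contains a complete decomposition of $u$, so $\bigvee_{k\ge N}x_k\ge u$ for all $N$ and $x_k\not\xrightarrow{o}0$; as $0\le x_k\le u$ this means $x_k\not\xrightarrow{uo}0$, contradicting (ii). The delicate points are the exhaustion of $u$ by the maximal family and the verification that the sequence fails to be $uo$-null; all the norm control rests on order continuity, which is precisely why it had to be secured first.
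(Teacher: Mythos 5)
Your proposal is correct, but it takes a genuinely more self-contained route than the paper. The paper disposes of both nontrivial implications by citation: (ii) $\Rightarrow$ (iii) is quoted outright from \cite[Theorem 5.3]{DOT}, and (iii) $\Rightarrow$ (i) rests on \cite[Corollary 4.14]{KMT} (in an order continuous atomic Banach lattice, $x_\alpha\xrightarrow{un}0$ iff $P_ax_\alpha\to 0$ for every atom $a$) combined with Proposition \ref{atomipointwiseconvergence}. Your (iii) $\Rightarrow$ (i) replaces the KMT citation with the one-line computation $\lvert x_\alpha\rvert\wedge a=\min\{\lvert c_\alpha\rvert,1\}\,a$, which is valid since the atom property gives $[0,a]\subseteq \mathrm{span}\{a\}$ and $P_a$ is a lattice homomorphism; you thus only need the direction un $\Rightarrow$ pointwise, plus Proposition \ref{atomipointwiseconvergence} and order continuity, avoiding the un-topology machinery of \cite{KMT} entirely. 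Your (ii) $\Rightarrow$ (iii) essentially reconstructs the content of the cited DOT result: order continuity via the disjoint-sequence characterization, and atomicity via the typewriter sequence built from finite disjoint decompositions of a diffuse element $u$ into components of norm less than $1/m$. The Zorn/maximality argument for that key lemma and the verification that every tail of the sequence dominates $u$ (so the sequence, being order bounded by $u$, cannot be $uo$-null) are both sound; note that all the suprema, band projections and splittings you use there do exist, because by that stage order continuity has been secured and hence $X$ is Dedekind complete --- your instinct to establish order continuity first is exactly what makes the atomicity step work. What the paper's approach buys is brevity; what yours buys is a proof from first principles that isolates the only two Banach-lattice facts genuinely needed.

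One small repair is needed in the order-continuity step. Your parenthetical argument that an order bounded disjoint sequence is order null forms $s=\bigwedge_N\bigvee_{k\ge N}x_k$, but at that point $X$ is assumed \emph{not} order continuous, hence need not be Dedekind complete, and those suprema may simply fail to exist (think of $C(K)$ spaces). The claim itself is true in every vector lattice: disjoint sequences are $uo$-null (see \cite{GTX}), and a $uo$-null order bounded sequence is $o$-null because the two convergences agree on order bounded nets, as recalled in the introduction of the paper. Replacing the $\bigwedge\bigvee$ computation by that citation (or by a passage to the Dedekind completion together with the fact that $uo$-convergence passes between a vector lattice and its completion) closes the gap; the rest of your contradiction --- $x_n\xrightarrow{uo}0$, hence $x_n\xrightarrow{un}0$ by (ii), hence $\lVert x_n\rVert\to 0$ by order boundedness, contradicting $\lVert x_n\rVert\ge\varepsilon$ --- stands as written.
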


\begin{proof}
(i) $\Longrightarrow$ (ii) is trivial. (ii) $\Longrightarrow$ (iii) is part of \cite[Theorem 5.3]{DOT}. For (iii) $\Longrightarrow$ (i), suppose $X$ is order continuous and atomic. 
Then, it follows from \cite[Corollary 4.14]{KMT}, that $x_\alpha\xrightarrow{un}0$ iff $P_ax_{\alpha}\rightarrow 0$ for any atom $a\in X$ and,
by Proposition \ref{atomipointwiseconvergence}, this holds iff $x_\alpha\xrightarrow{uo}0$.
\end{proof}

The following result is a partial converse of Corollary \ref{o-convergence is topological}.

\begin{thm}\label{converse}
Assume that there is a Hausdorff locally solid topology $\tau$ on an order continuous Banach lattice $X$ such that order convergence and $\tau$-convergence 
coincide on each order interval of $X$. Then $X$ is atomic. 
\end{thm}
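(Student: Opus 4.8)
The plan is to argue by contraposition: assuming $X$ is \emph{not} atomic, I will exhibit an order interval of $X$ on which $o$-convergence fails to be topological, contradicting its coincidence with the (genuinely topological) convergence $\tau$. First I would isolate an atomless piece. If $X$ is not atomic, the band generated by its atoms is proper, so its disjoint complement is a nonzero band $B$ containing no atoms. Choosing $0<u\in B$ and passing to the band $B_u$ generated by $u$, one obtains an order continuous Banach lattice with weak unit $u$ and no atoms (an atom of $B_u$ would be an atom of $X$). Since $B_u$ is a projection band, it is a regular sublattice, so $o$-convergence of nets lying in $B_u$ with bounds in $B_u$ is the same whether computed in $B_u$ or in $X$; in particular the order interval $[0,u]$ and $o$-convergence on it agree in $B_u$ and in $X$.

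Next I would pass to a concrete model. By the representation of order continuous Banach lattices with a weak unit as an order dense ideal of some $L_1(\mu)$ with $u\leftrightarrow\mathds 1$, and since $B_u$ has no atoms, the measure $\mu$ is atomless. On the order interval $[0,u]=[0,\mathds 1]$, $o$-convergence coincides with $uo$-convergence (they agree on order bounded sets), and $uo$-convergence in $L_1(\mu)$ is exactly $\mu$-a.e.\ convergence, as recalled in the introduction. Thus on $[0,u]$, $o$-convergence is precisely a.e.\ convergence of uniformly ($\le\mathds 1$) bounded functions.

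I would then run the \emph{typewriter} construction. Using atomlessness, split $\mathds 1$ into dyadic blocks $\chi_{E_{k,j}}$, where for each level $k$ the sets $E_{k,1},\dots,E_{k,2^{k}}$ partition the space into pieces of measure $2^{-k}$, and enumerate all of these as a single sequence $f_1,f_2,\dots$ in $[0,\mathds 1]$. Two facts are then standard. First, $f_n\not\oc 0$: almost every point is covered infinitely often, so $f_n\to 0$ a.e.\ fails; equivalently, $\inf_n\sup_{m\ge n}f_m=u$, since beyond any index there is a complete dyadic level whose disjoint blocks sum to $u$ and hence have supremum $u$. Second, $\|f_n\|_1=2^{-k}\to 0$, so $f_n\to 0$ in measure, whence \emph{every} subsequence has a further subsequence converging a.e.\ to $0$, i.e.\ $o$-converging to $0$.

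Finally I would invoke the Urysohn subsequence criterion for topological convergence: in any topological space a sequence converges to a point as soon as every subsequence admits a further subsequence converging to that point. Applying this to $(f_n)\subseteq[0,u]$ with the subspace $\tau$-topology, the second fact together with the hypothesis (each $o$-convergent sub-subsequence inside $[0,u]$ is $\tau$-convergent to $0$) shows that every subsequence of $(f_n)$ has a sub-subsequence $\tau$-converging to $0$; hence $f_n\xrightarrow{\tau}0$, and the hypothesis forces $f_n\oc 0$, contradicting the first fact. Therefore $X$ must be atomic. I expect the main obstacle to be the reduction to a concrete model and the bookkeeping that order convergence computed inside the band $B_u$, and inside the $L_1(\mu)$-model, genuinely coincides with order convergence in $X$ on the relevant interval; once the atomless $L_1$-model is in place, the typewriter construction and the Urysohn argument are routine. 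If one prefers to bypass the representation theorem, the same sequence can be produced by repeated bisection of $u$ into disjoint components, with the first fact coming from the order-$\limsup$ computation and the second replaced by a direct order-theoretic extraction of a null sub-subsequence, which is the more delicate point of that alternative.
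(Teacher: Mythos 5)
Your proposal is correct, but it takes a genuinely different route from the paper's. The paper argues directly: it first shows $\tau$ must be a Lebesgue topology (an $o$-null net is eventually in some interval $[-\nu,\nu]$, where the hypothesis turns $o$-convergence into $\tau$-convergence), then invokes the uniqueness of Hausdorff Lebesgue topologies on order bounded sets (\cite[Theorem 4.22]{AB03}, applied to $\tau$ and the norm topology) to conclude that a $un$-null net satisfies $\lvert x_\alpha\rvert\wedge u\xrightarrow{\tau}0$, hence $\lvert x_\alpha\rvert\wedge u\xrightarrow{o}0$, on each $[-u,u]$; this identifies $uo$-convergence with $un$-convergence, and atomicity then follows from Theorem \ref{order cont. and atomic}. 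You instead argue by contraposition: you extract an atomless band $B_u$ with weak unit $u$, represent it as an order dense ideal of an atomless $L_1(\mu)$, and exhibit a typewriter sequence in $[0,u]$ that is not $o$-null yet has the property that every subsequence admits a further $o$-null subsequence; the Urysohn subsequence criterion then contradicts the assumption that $o$-convergence on $[0,u]$ is the convergence of a topology. Both arguments are sound, but they buy different things. Your argument never uses local solidity or Hausdorffness of $\tau$ --- only that it is a topology, and only for sequences in a single order interval --- so it proves a formally stronger statement and is a tighter converse to Corollary \ref{o-convergence is topological}; the price is heavier external machinery (the $L_1(\mu)$-representation of order continuous Banach lattices with a weak unit, plus the transfer of $o$- and $uo$-convergence between $X$, the band $B_u$, and $L_1(\mu)$, which you rightly flag as the delicate bookkeeping --- it works because $B_u$ is a projection band in $X$ and a regular, order dense ideal in $L_1(\mu)$). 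The paper's route is shorter and stays entirely inside the lattice-topology framework, reusing its Theorem \ref{order cont. and atomic} and a single citation to \cite{AB03}, but it genuinely needs the Hausdorff locally solid hypotheses that your argument shows to be superfluous.
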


\begin{proof}
First we show that $\tau$ is a Lebesgue topology. Assume $x_\alpha\xrightarrow{o}0$, then there exist $\alpha_0$ and $\nu\in X_+$ such that $(x_\alpha)_{\alpha\geq\alpha_0}\subseteq[-\nu,\nu]$. 
By the hypothesis, $(x_\alpha)_{\alpha\geq\alpha_0}\xrightarrow{\tau}0$ in $[-\nu,\nu]$, and so $x_\alpha \xrightarrow{\tau} 0$ in $X$.

Let $x_\alpha\xrightarrow{uo}0$. Since $X$ is order continuous, then $x_\alpha\xrightarrow{un}0$. 
Suppose now $x_\alpha\xrightarrow{un}0$, and take $u\in X_+$. Then $\big\lVert\lvert x_\alpha\rvert\wedge u\big\rVert\to 0$. 
Since the net $\lvert x_\alpha\rvert\wedge u$ is order bounded, then, by \cite[Theorem 4.22]{AB03}, $\lvert x_\alpha\rvert\wedge u\xrightarrow{\tau}0$ in $[-u,u]$, 
and so $\lvert x_\alpha\rvert\wedge u\xrightarrow{o}0$. We conclude $x_\alpha\xrightarrow{uo}0$. Thus $x_\alpha\xrightarrow{uo}0$ $\Longleftrightarrow$ $x_\alpha\xrightarrow{un}0$.
It follows from Theorem \ref{order cont. and atomic} that $X$ is atomic.
\end{proof}

\end{document}